\documentclass[12pt, reqno]{amsart}
\makeatletter
\@namedef{subjclassname@1991}{$\mathrm{1991}$ Mathematics Subject Classification}
\@namedef{subjclassname@2000}{$\mathrm{2000}$ Mathematics Subject Classification}
\@namedef{subjclassname@2010}{$\mathrm{2010}$ Mathematics Subject Classification}
\@namedef{subjclassname@2020}{$\mathrm{2020}$ Mathematics Subject Classification}
\makeatother
\usepackage{amsmath,amsthm, amscd, amsfonts, amssymb, graphicx, color}
\usepackage[bookmarksnumbered, colorlinks, plainpages,linkcolor=blue,urlcolor=blue,citecolor=blue]{hyperref}
\newtheorem{thm}{Theorem}[section]
\newtheorem{cor}[thm]{Corollary}
\newtheorem{lem}[thm]{Lemma}

\newtheorem{defn}[thm]{Definition}

\newtheorem{exam}[thm]{Example}
\numberwithin{equation}{section}

\begin{document}

\title{hybrid core-EP-$(b,c)$ inverses in rings}

\author{Huanyin Chen}
\address{School of Big Data, Fuzhou University of International Studies and Trade, Fuzhou 350202, China}
\email{<huanyinchenfz@163.com>}

\subjclass[2020]{16U90, 15A09.} \keywords{core inverse; hybrid $(b,c)$-inverse, hybrid core-EP-$(b,c)$-inverse, polar like property, ring.}

\begin{abstract}In this paper, we study when the core inverse in a ring coincides with its hybrid $(b,c)$-inverse, i.e., hybrid core-$(b,c)$-inverse.
We present many characterizations of hybrid core-$(b,c)$-inverse. To establish a broader framework for this generalized inverses, we define hybrid core-EP-$(b,c)$-inverse that serves as a natural extension of the hybrid core-$(b,c)$-inverse. We characterize this new generalized inverse by combining the hybrid core-$(b,c)$-inverses and quasinilpotents. This generalized inverse is thereby examined through a novel limit-based approach. Its polar-like properties and image-based representations are presented.\end{abstract}

\maketitle

\section{Introduction}

A ring $R$ is called a *-ring if there exists an involution $*: x\to x^*$ satisfying $(x+y)^*=x^*+y^*, (\lambda x)^*=\overline{\lambda} x^*, (xy)^*=y^*x^*, (x^*)^*=x$. An element $a$ in a *-ring $R$ has core inverse if and only if there exists an element $x\in R$ such that $$ax^2=x, (ax)^*=ax, xa^2=a.$$ If such $x$ exists, it is unique, and denote it by $a^{\tiny\textcircled{\#}}$ (see~\cite{M,X1}). A square complex matrix $A$ has core inverse $X$ if and only if $AX=P_A, \mathcal{R}(X)\subseteq \mathcal{R}(A)$ (see~\cite{BT}). Here, $\mathcal{R}(X)$ and $\mathcal{R}(A)$ are the range spaces of $X$ and $A$, respectively, and $P_A$ is the orthogonal projection onto $R(A)$.

Gao and Chen extended the concept of the core inverse and introduced the notion of core-EP inverse (i.e., pseudo core inverse) (see~\cite{GC,Z1}). An element $a\in R$ has core-EP inverse if there exist $x\in R$ and $k\in \Bbb{N}$ such that $$ax^2=x, (ax)^*=ax, xa^{k+1}=a^k.$$ If such $x$ exists, it is unique, and denote it by $a^{\tiny\textcircled{D}}$. A square complex matrix $A$ has core-EP inverse $X$ if and only if $X=XAX, \mathcal{R}(X)=\mathcal{R}(X^*)=\mathcal{A}^k$, where $k=ind(A)$.

Following Drazin~\cite{D1}, an element $a\in R$ has left hybrid $(b,c)$-inverse if there exists an element $x\in R$ such that
$$xax=x, \ell(x)=\ell(b)~\mbox{and}~Rx=Rc.$$ If such $x$ exists, it is unique and is denoted by $a_{lh}^{(b,c)}.$ Dually, an element $a\in R$ has right hybrid $(b,c)$-inverse if there exists an element $x\in R$ such that
$$xax=x, xR=bR~\mbox{and}~r(x)=r(c).$$ If such $x$ exists, it is unique and is denoted by $a_{rh}^{(b,c)}.$

The purpose of this paper is to investigate when the core inverse of a ring element coincides with its left (right) hybrid $(b,c)$-inverse. We adopt

\begin{defn} An element $a\in R$ has left hybrid core-$(b,c)$-inverse if $a\in R^{\tiny\textcircled{\#}}$ and $a^{\tiny\textcircled{\#}}=a_{lh}^{(b,c)}$. We use $a_{lh(b,c)}^{\tiny\textcircled{\#}}$ to stands for $a^{\tiny\textcircled{\#}}$. The set of all left hybrid core-$(b,c)$ invertible elements in $R$ is denoted by $R_{lh(b,c)}^{\tiny\textcircled{\#}}$.\end{defn}

Dually, we can define the right hybrid core-$(b,c)$-inverse by the existence of $y\in R$ such that
 $a\in R^{\tiny\textcircled{\#}}$ and $a^{\tiny\textcircled{\#}}=a_{rh}^{(b,c)}$.
We use $a_{rh(b,c)}^{\tiny\textcircled{\#}}$ to stand for $a^{\tiny\textcircled{\#}}$. The set of all right hybrid core-$(b,c)$ invertible elements in $R$ is denoted by $R_{rh(b,c)}^{\tiny\textcircled{\#}}$.

To establish a broader framework for this generalized inverses, we define hybrid core-EP-$(b,c)$-inverse that serves as a natural extension of the hybrid core-$(b,c)$-inverse.

\begin{defn} An element $a\in R$ has left hybrid core-EP-$(b,c)$-inverse if there exist $x,y\in R$ such that $$a=x+y, x^*y=yx=0, x\in R_{lh(b,c)}^{\tiny\textcircled{\#}}, y\in R~\mbox{is nilpotent}.$$ We use $a_{lh(b,c)}^{\tiny\textcircled{D}}$ to stands for $x_{lh(b,c)}^{\tiny\textcircled{\#}}$. The set of all generalized left hybrid core-EP-$(b,c)$-invertible elements in $R$ is denoted by $R_{lh(b,c)}^{\tiny\textcircled{D}}$.\end{defn}

In Section 2, we characterize the left (right) hybrid core-$(b,c)$-inverse. We prove that $a\in R_{lh(b,c)}^{\tiny\textcircled{\#}}$ if and only if
$a\in R^{\tiny\textcircled{\#}}$ and $\ell(a)=\ell(b), Ra^*=Rc$, if and only if
$a\in R_{lh}^{(b,c)}$ and $\ell(a)=\ell(b), Ra^*=Rc.$

An element $a$ in $\mathcal{A}$ is EP (i.e., an EP element) if there exists some $x\in \mathcal{A}$ such that
$ax^2=x, (ax)^*=xa, a=xa^2.$ Evidently, $a\in \mathcal{A}$ is EP if and only if there exists $x\in \mathcal{A}$ such that $a^2x=a, ax=xa, (ax)^*=ax$ if and only if there exists $x\in \mathcal{A}$ such that $ax^2=x, (xa)^*=xa, xa^2=a$ if and only if $a\in \mathcal{A}^{\#}$ and $(aa^{\#})^*=aa^{\#}$ (see~\cite{CM,MP,MDK,XS1}). In Section 3, we characterize characterizations of left (right) hybrid core-$(b,c)$-inverse via the EP property.

In Section 4, the left (right) hybrid core-EP-$(b,c)$-inverse is examined through a novel limit-based approach.
We prove that $a\in R_{lh(b,c)}^{\tiny\textcircled{D}}$ if and only if $a\in R^{\tiny\textcircled{D}}$ and $aa^{\tiny\textcircled{D}}a\in R_{lh(b,c)}^{\tiny\textcircled{\#}}.$ The polar-like property of left (right) hybrid core-EP-$(b,c)$-inverse are presented.

Finally, in Section 5, we are concerned with image-based representations for the left (right) hybrid core-EP-$(b,c)$-inverse. We prove that
$a\in R_{lh(b,c)}^{\tiny\textcircled{D}}$ if and only if $a^m\in R_{lh(b,c)}^{\tiny\textcircled{\#}}$ for some $m\in {\Bbb N}$.

Throughout the paper, all *-ring are associative with an identity. Let ${\Bbb C}^{n\times n}$ be the ring of all $n\times n$ complex matrices with conjugate transpose $*$. $\ell(x)$ and $r(x)$ stand for the left and right annihilators of $x\in R$, respectively. The set of all nilpotent elements in $R$ is denoted by $R^{nil}$.

\section{hybrid core-$(b,c)$-inverse}

The purpose of this section is to investigate elementary properties of the left (right) hybrid core-$(b,c)$-inverse in a *-ring. Our starting points is the following.

\begin{thm} Let $a,b,c\in R$. Then the following are equivalent:\end{thm}
\begin{enumerate}
\item [(1)] $a\in R_{lh(b,c)}^{\tiny\textcircled{\#}}$.
\item [(2)] $a\in R^{\tiny\textcircled{\#}}$ and $\ell(a)=\ell(b), Ra^*=Rc.$
\item [(3)] $a\in R_{lh}^{(b,c)}$ and $\ell(a)=\ell(b), Ra^*=Rc.$
\end{enumerate}
\begin{proof} $(1)\Rightarrow (2)$ By hypothesis, $a\in R^{\tiny\textcircled{\#}}$ and $x:=a^{\tiny\textcircled{\#}}=a_{lh}^{(b,c)}$.
Then $a=xa^2$ and $x=ax^2$; hence, $xR=aR$. Thus, $\ell(b)=\ell(x)=\ell(a)$. Since $x=xax$ and $(ax)^*=ax$, we see that
$x^*=(ax)x^*$. Since $a=axa=(ax)^*a=x^*(a^*a)$. This implies that $x=xx^*a^*$ and $a^*=(a^*a)x$. Hence,
$Ra^*=Rx$. This implies that $Ra^*=Rx=Rc$.

$(2)\Rightarrow (3)$  By hypothesis, $a\in R^{\tiny\textcircled{\#}}$. Set $x=a^{\tiny\textcircled{\#}}$.
As in the preceding discussion, $xax=x, \ell(x)=\ell(a)$ and $Ra^*=Rx$.
Thus we see that $$xax=x, \ell(x)=\ell(b), Rx=Rc.$$
Therefore $x=a_{lh}^{(b,c)}$, as required.

$(3)\Rightarrow (1)$ Let $x=a_{lh}^{(b,c)}$. Then $$xax=x, \ell(x)=\ell(b), Rx=Rc.$$
By hypothesis, we have $$xax=x, \ell(x)=\ell(a), Rx=Ra^*.$$ Since $(1-xa)x=0$, we have $1-xa\in \ell(a)$, and so $a=xa^2$.
On the other hand, $x(1-ax)=0$; whence, $a^*(1-ax)=0$. Thus $a^*=a^*ax$.
Hence, $(ax)^*=(ax)^*ax$; and so $(ax)^*=ax$. As $x=xax$, we have $(1-xa)a=0$, and then $(1-xa)x=0$.
Thus $a=axa$, and then $1-ax\in \ell(a)$. Accordingly, $1-ax\in\ell(x)$, and so $(1-ax)x=0$, that is, $x=ax^2$.
This implies that $a\in R^{\tiny\textcircled{\#}}$ and
$a^{\tiny\textcircled{\#}}=x=a_{lh}^{(b,c)}$, as asserted.\end{proof}

We denote $x$ in Theorem 2.1 by $a_{lh(b,c)}^{\tiny\textcircled{\#}}$, and call it the left hybrid core-$(b,c)$-inverse of $a$. As an immediate consequence, we prove that the core inverse $a^{\tiny\textcircled{\#}}$ of $a$ can be characterized as its left hybrid core-$(b,c)$-inverse, namely with $(b,c)=(a,a^*)$ .

\begin{cor} Let $a,b,c\in R$. Then the following are equivalent:\end{cor}
\begin{enumerate}
\item [(1)] $a\in R_{lh(b,c)}^{\tiny\textcircled{\#}}$.
\item [(2)] $a\in R_{lh}^{(b,c)}$ and $\ell(a)=\ell(b), r(a^*)=r(c).$
\end{enumerate}
\begin{proof} $(1)\Rightarrow (2)$ By virtue of Theorem 2.1, we have $a\in R_{lh}^{(b,c)}$ and $\ell(a)=\ell(b), Ra^*=Rc.$
Therefore $\ell(a)=\ell(b), r(a^*)=r(c).$

$(2)\Leftarrow (2)$ Set $x=a_{lh}^{(b,c)}$. Then $$xax=x, \ell(x)=\ell(b)~\mbox{and}~Rx=Rc.$$ Thus $(1-xa)x=0$, and so $(1-xa)b=0$, that is,
$1-xa\in \ell(b)$. By hypothesis, $1-xa\in \ell(a)$. This implies that $a=xa^2$. Moreover, $1-ax\in r(x)$, and then $1-ax\in r(c)$.
We infer that $a^*(1-ax)=0$, and then $a^*=a^*(ax)$. It follows that $a=(ax)^*a$; hence, $ax=(ax)^*ax$. Therefore $(ax)^*=ax$.

As $a^*=a^*(ax)$, we have $a=(ax)^*a=axa$. Hence, $(1-ax)a=0$, and then $1-ax\in \ell(a)\subseteq \ell(b)$. Since $\ell(x)=\ell(b)$, we get $(1-ax)x=0$.
Therefore $x=ax^2$, and so $a^{\tiny\textcircled{\#}}=x$, as required.\end{proof}

\begin{cor} Let $a,b,c\in R$. Then the following are equivalent:\end{cor}
\begin{enumerate}
\item [(1)] $a\in R_{lh(b,c)}^{\tiny\textcircled{\#}}$.
\vspace{-.5mm}
\item [(2)] $Rb=Rcab, \ell(cab)=\ell(c), \ell(a)=\ell(b), r(a^*)=r(c).$
\vspace{-.5mm}
\item [(3)] $Rca\bigoplus \ell(b)=R, Rc\bigcap \ell(ab)=0, \ell(a)=\ell(b), r(a^*)=r(c).$
\end{enumerate}
\begin{proof} $(1)\Rightarrow (2)$ In view of Corollary 2.2, $a\in R_{rh}^{(b,c)}, \ell(a)=\ell(b), r(a^*)=r(c).$
Set $z=_{lh(b,c)}^{\tiny\textcircled{\#}}$. Then $$zaz=z, \ell(z)=\ell(b), Rz=Rc.$$ Hence,
$1-za\in \ell(z)=\ell(b)$. Thus $b=zab$. Write $z=rc$ for some $r\in R$. Then $b=r(cab)$, and so
$Rb\subseteq Rcab$.

If $scab=0$ for some $s\in R$, then $sca\in \ell(b)=\ell(z)$, and so $scaz=0$.
As $z(1-az)=0$, we see that $c(1-az)=0$; hence, $c=caz$. This implies that
$sc=0$, and so $s\in \ell(c)$. Then $\ell(cab)\subseteq \ell(c)$. Obviously, $cab\in Rb$ and $\ell(c)\subseteq \ell(cab)$. Hence $Rb=Rcab, \ell(cab)=\ell(c)$.

$(2)\Rightarrow (3)$ Since $b\in Rcab$, we can find $z\in R$ such that
$b=zcab$, and then $(1-zca)b=0$. Since $1=zca+(1-zca)$, we deduce that
$Rca+\ell(b)=R$.

If $r\in Rc\bigcap \ell(ab)$, then $r=sc$ for some $s\in R$.
Then $scab=rab=0$. This implies that $s\in \ell(cab)=\ell(c)$; hence, $r=sc=0$.
Thus $Rc\bigcap \ell(ab)=0$.

If $r\in Rca\bigcap \ell(b)$, then $r=xca$ for some $x\in R$. Hence
$xcab=0$, and so $xc\in Rc\bigcap \ell(ab)=0$. Thus $r=(xc)a=0$. This implies that
$Rca\bigcap \ell(b)=0$, and then $Rca\bigoplus \ell(b)=R$.

$(3)\Rightarrow (1)$ By hypothesis, we have $Rca+\ell(b)=R$ and $Rc\bigcap \ell(ab)=0$.
Then $xca+y=1$ for some $x\in R, y\in \ell(b)$.
Hence $b=xcab$, and so $cab=caxcab$. This implies that $c-caxc\in Rc\bigcap \ell(ab)=0$. Hence, $c=caxc$.
Set $z=xc$. Then $zaz=x(caxc)=xc=z$. Clearly, $Rz\subseteq Rc\subseteq Rz$. Hence, $Rz=Rc$.

Since $b=zab$, we have $\ell(z)\subseteq \ell(b)$. If $r\in \ell(b)$, then $r\in \ell(a)$.
Hence $rz=rxc\in Rc\bigcap \ell(ab)=0$, i.e., $r\in \ell(z)$. Thus $\ell(z)=\ell(b)$.
Therefore $a\in R_{lh}^{(b,c)}$. This completes the proof by Corollary 2.2.
\end{proof}

From the dual perspective, we characterize the right hybrid core-$(b,c)$-inverse as follows.

\begin{thm} Let $a,b,c\in R$. Then the following are equivalent:\end{thm}
\begin{enumerate}
\item [(1)] $a\in R_{rh(b,c)}^{\tiny\textcircled{\#}}$.
\item [(2)] $a\in R^{\tiny\textcircled{\#}}$ and $aR=bR, r(a^*)=r(c).$
\item [(3)] $a\in R_{rh}^{(b,c)}$ and $aR=bR, r(a^*)=r(c).$
\item [(4)] $a\in R_{rh}^{(b,c)}$ and $\ell(a)=\ell(b), r(a^*)=r(c).$
\end{enumerate}

\begin{cor} Let $a,b,c\in R$. Then the following are equivalent:\end{cor}
\begin{enumerate}
\item [(1)] $a\in R_{rh(b,c)}^{\tiny\textcircled{\#}}$.
\vspace{-.5mm}
\item [(2)] $cR=cabR, r(cab)=r(b), \ell(a)=\ell(b), r(a^*)=r(c).$
\vspace{-.5mm}
\item [(3)] $abR+r(a^*)=R, bR\bigcap r(ca)=0, \ell(a)=\ell(b), r(a^*)=r(c).$
\end{enumerate}

\begin{cor} Let $a,b,c\in R$. Then the following are equivalent:\end{cor}
\begin{enumerate}
\item [(1)] $a\in R_{b,c}^{\tiny\textcircled{\#}}$.
\item [(2)] $a\in R_{rh}^{(b,c)}\bigcap R_{lh}^{(b,c)}.$
\end{enumerate}
\begin{proof} $(1)\Rightarrow (2)$ In light of~\cite[Theorem 2.1]{CM1}, we have $aR=bR, r(a^*)=r(c).$ It follows from Theorem 2.4 that
$a\in R_{rh}^{(b,c)}$. Dually, $a\in R_{lh}^{(b,c)}$ by Theorem 2.1.

$(2)\Rightarrow (1)$ In view of Theorem 2.4, $aR=bR$. By virtue of Theorem 2.1, $Ra^*=Rc$.
According to ~\cite[Theorem 2.1]{CM1}, $a\in R_{b,c}^{\tiny\textcircled{\#}}$, as asserted.\end{proof}

\begin{exam}\end{exam} Let  $R={\Bbb Z}$ be the ring of all integers with the identical map as an involution.
Take $a=1, b=1, c=3\in R$, then $a$ is right hybrid core-$(1,2)$-invertible. Indeed, $x=1$ is the right hybrid core-$(1,2)$-inverse of $a$ as
 $a$ has core inverse $x$, $aR=bR, \ell(a^*)=\ell(c)$. However, it has not left hybrid core $(b,c)$-inverse as $Ra^*\neq Rc$.\\

We next consider the special case $b=c$ for the left hybrid core-$(b,c)$-inverse.

\begin{thm} Let $a,b\in R$. Then the following are equivalent:\end{thm}
\begin{enumerate}
\item [(1)] $a\in R_{lh(b,b)}^{\tiny\textcircled{\#}}$.
\item [(2)] $a\in R_{(b,b)}^{\tiny\textcircled{\#}}$.
\item [(3)] $a,b\in R^{\tiny\textcircled{\#}}$ and $aa^{\tiny\textcircled{\#}}=b^{\tiny\textcircled{\#}}b.$
\end{enumerate}
\begin{proof} $(1)\Rightarrow (2)$ In view of Theorem 2.1, $a\in R^{\tiny\textcircled{\#}}$ and $\ell(a)=\ell(b), Ra^*=Rb.$
Then $a\in R$ is regular. This implies that $a^*$ is regular. Since $Ra^*=Rb,$ we see that $b\in R$ is regular.
Write $a=aa^{-}a$ and $b=bb^{-}b$. Since $(1-aa^{-})a=0$, we have $(1-aa^{-})b=0$. Thus, $b=aa^{-}b\in aR$.
As $(1-bb^{-})b=0$, we have $(1-bb^{-})a=0$; hence, $a=bb^{-}a\in bR$. Therefore $aR=bR$. According to~\cite[Theorem 2.1]{CM1}, $a\in R_{(b,b)}^{\tiny\textcircled{\#}}$

$(2)\Rightarrow (1)$ Since $a\in R_{(b,b)}^{\tiny\textcircled{\#}}$, by virtue of ~\cite[Theorem 2.1]{CM1},
$a\in R^{\tiny\textcircled{\#}}$ and $aR=bR, Ra^*=Rb.$ Therefore $\ell(a)=\ell(b)$.
In light of Theorem 2.1, $a\in R_{lh(b,b)}^{\tiny\textcircled{\#}}$.

$(2)\Leftrightarrow (3)$ This is proved in ~\cite[Theorem 2.6]{CM1}.\end{proof}

\begin{cor} Let $a,b\in R$. Then the following are equivalent:\end{cor}
\begin{enumerate}
\item [(1)] $a\in R_{lh(b,b)}^{\tiny\textcircled{\#}}$.
\item [(2)] $a,b,ba\in R^{\tiny\textcircled{\#}}$, $a^{\tiny\textcircled{\#}}=(ba)^{\tiny\textcircled{\#}}b, b^{\tiny\textcircled{\#}}=a(ba)^{\tiny\textcircled{\#}}.$
\item [(3)] $a,b,ba\in R^{\tiny\textcircled{\#}}$, $aR=bR, (ba)^{\tiny\textcircled{\#}}=
a^{\tiny\textcircled{\#}}b^{\tiny\textcircled{\#}}.$
\end{enumerate}
\begin{proof} Straightforward by Theorem 2.8 and ~\cite[Theorem 2.8]{CM1}.\end{proof}

Let $\mathcal{N}(X)$ denote the null space of a matrix $X$. We now derive

\begin{cor} Let $A,B\in {\Bbb C}^{n\times n}$. Then the following are equivalent:\end{cor}
\begin{enumerate}
\item [(1)] $A$ has left hybrid core-$(B,B)$-inverse.
\vspace{-.5mm}
\item [(2)] $A$ has core-$(B,B)$-inverse.
\vspace{-.5mm}
\item [(3)] $rank(B)=rank(BAB), \mathcal{N}(A^*)=\mathcal{N}(B^*)=\mathcal{N}(B).$
\end{enumerate}
\begin{proof} $(1)\Leftrightarrow (2)$ This is proved by Theorem 2.8.

$(2)\Leftrightarrow (3)$ By virtue of ~\cite[Theorem 1.5]{R}, $A$ has hybrid $(B,B)$-inverse if and only if $rank(B)=rank(BAB)$, and we are through by Theorem 2.1.\end{proof}

\section{EP properties}

The aim of this section is to characterize the hybrid core-$(b,c)$-inverse of an element by exploiting the
EP property of certain elements. We derive necessary and sufficient conditions for the existence of such generalized inverses.

\begin{thm} Let $a,b,c\in R$ with $\ell(b)=\ell(c)$. Then $a\in R_{lh(b,c)}^{\tiny\textcircled{\#}}$ if and only if\end{thm}
\begin{enumerate}
\item [(1)] $(ac)^2$ is EP;
\item [(2)] $\ell(ca)=\ell(b), (ac)^{\pi}c=(ca)^{\pi}a=0$.
\end{enumerate}
\begin{proof} $(1)\Rightarrow (2)$ By hypothesis, $a^{\tiny\textcircled{\#}}=a_{lh}^{(b,c)}$. In view of Corollary 2.3, we have
$Rca\bigoplus \ell(b)=R, Rc\bigcap \ell(ab)=0, \ell(a)=\ell(b), r(a^*)=r(c).$ If $rca=0$, then $rc\in Rc\bigcap \ell(ab)=0$. Hence $r\in \ell(c)$, and so $r\in \ell(b)$. Thus $\ell(ca)\subseteq \ell(b)$. If $rb=0$, then $rca=(rc)a=0$. Thus, $\ell(ca)=\ell(b)$.

Write $1=zca+y$ for some $z\in R, y\in \ell(b)$. Then $y\in \ell(c)$, and so $yca=0$.
Thus, $ca=z(ca)^2+yca=z(ca)^2$. Moreover, we have $1=(z^2ca)(ca)+y$. Set $x=z^2ca$. Then $1=xca+y$.
This implies that $(ca)^2=(ca)x(ca)^2$, and then $ca-(ca)x(ca)\in \ell(ca)\bigcap Rca=\ell(b)\bigcap Rca=0$.
Hence, $ca=(ca)x(ca)$, and so $(ca)^2=(ca)^2x(ca)$. We infer that $ca-(ca)^2x\in \ell(ca)=\ell(b).$ As $x\in Rca$, we have
$ca-(ca)^2x\in Rca\bigcap \ell(b)=0$; whence $ca=(ca)^2x$. Then $ca\in (ca)^2R\bigcap R(ca)^2$, and so $ca\in R^{\#}$. In view of Cline's formula, $ac\in R^D$, and then $(ac)^2\in R^D$. Clearly, $\big((ac)^2\big)^D\big((ac)^2\big)^2=[(ac)^D(ac)^2]^2=[a((ca)^{\#})^2c(ac)^2]^2=[a((ca)^{\#})^2(ca)^2c]^2=(ac)^2$.
Then $a^{\tiny\textcircled{\#}}=(ca)^{\#}c$. This implies that $aa^{\tiny\textcircled{\#}}=a(ca)^{\#}c$. Thus, $(ac)^2[(ac)^2]^D=ac(ac)^D=aca[(ca)^{\#}]^2c=a(ca)^{\#}c$ is a projection.
Therefore $(ac)^2\in R$ is EP.

Since $(1-aa^{\tiny\textcircled{\#}})a=0$ and $\ell(a)=\ell(b)$, we have $(1-aa^{\tiny\textcircled{\#}})b=0$. This implies that
$(1-a^{\tiny\textcircled{\#}}a)b=0$; hence, $(ca)^{\pi}b=[1-(ca)^{\#}ca]b=(1-a^{\tiny\textcircled{\#}}a)b=0$.
Since $\ell(a)=\ell(b)$, we deduce that $(ca)^{\pi}a=0$.

Furthermore, we verify that
$(ac)^{\pi}b=(1-(ac)^{\#}ac)b=(1-a((ca)^D)^2cac]b=(1-a((ca)^{\#}c))b=(1-aa^{\tiny\textcircled{\#}})b=0$.
As $\ell(b)=\ell(c)$, we deduce that $(ac)^{\pi}c=0,$ as required.

$(2)\Rightarrow (1)$ Since $(ac)^2$ is EP, $(ac)^2$ has group inverse, and then $ac\in R^{D}$ and $(ac)^2((ac)^2)^D=ac(ac)^D$.
Hence $[ac(ac)^D]^*=ac(ac)^D$. By virtue of Cline's formula, we have $ca\in R^D$.

Since $(ca)^{\pi}c=0$, we deduce that $ca=(ca)^D(ca)^2$. Then $ca\in R^{\#}$. By hypothesis,
$\ell(ca)=\ell(b)=\ell(c)$. Set $x=(ca)^{\#}c$.

Claim 1. $x=xax$. Clearly, $xax=(ca)^{\#}ca(ca)^{\#}c=(ca)^{\#}c=x$.

Claim 2. $\ell(x)=\ell(b)$. If $rx=0$, then $r(ca)^{\#}c=0$, and so $rca=0$. This implies that $rb=0$.
If $rb=0$, then $rca=0$; hence, $rx=r(ca)^{\#}c=0$, as required.

Claim 3. $Rx=Rc$. Clearly, $x=(ca)^{\#}c\in Rc$. Since $[1-(ca)^{\#}ca]ca=0$ and $\ell(ca)=\ell(c)$, we have
$[1-(ca)^{\#}ca]c=0$; whence, $c=[(ca)^{\#}c]ac=ca[(ca)^{\#}c]=cax\in Rx,$ as desired.

Accordingly, $a\in R_{lh}^{(b,c)}$ and $a_{lh}^{(b,c)}=(ca)^{\#}c$.

Set $x=(ca)^{\#}c$. Then we verify that
$$\begin{array}{rll}
ax&=&a(ca)^{\#}c=ac((ac)^D)^2ac=ac(ac)^D,\\
(ax)^*&=&ax,\\
ax^2&=&ac(ac)^D(ca)^{\#}c=(1-(ac)^{\pi}£©(ca)[(ca)^{\#}]^2c\\
&=&(ca)^{\#}c=x,\\
xa^2&=&((ca)^{\#}ca£©a=[1-(ca)^{\pi}]a=a.
\end{array}$$ Thus $a^{\tiny\textcircled{\#}}=x=a_{lh}^{(b,c)}$. Therefore $a\in R_{lh(b,c)}^{\tiny\textcircled{\#}}$, as asserted.\end{proof}

\begin{cor} Let $a,b,c\in R$ with $\ell(b)=\ell(c)$. If $a\in R_{lh(b,c)}^{\tiny\textcircled{\#}}$, then $a(ca)^{\pi}=0$.\end{cor}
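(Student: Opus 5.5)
The plan is to rewrite $(ca)^{\pi}$ in terms of the core inverse of $a$, and then use that the core inverse is an inner inverse of $a$. Here $(ca)^{\pi}=1-(ca)^{\#}ca$, where $(ca)^{\#}$ denotes the group inverse of $ca$.

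First I will record what Theorem 3.1 gives. Since $\ell(b)=\ell(c)$ and $a\in R_{lh(b,c)}^{\tiny\textcircled{\#}}$, the forward direction of Theorem 3.1 shows that $(ac)^2$ is EP, $\ell(ca)=\ell(b)$, and $(ac)^{\pi}c=(ca)^{\pi}a=0$. In particular its proof shows that $ca\in R^{\#}$, so $(ca)^{\pi}=1-(ca)^{\#}ca$ makes sense. The backward direction of that proof then identifies the core inverse explicitly:
\[
a^{\tiny\textcircled{\#}}=(ca)^{\#}c .
\]
I will take this formula from Theorem 3.1. The forward part also contains the same identity, derived from $x=a_{lh}^{(b,c)}$.

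Next I set $x=a^{\tiny\textcircled{\#}}$ and check that $axa=a$, using only $ax^2=x$ and $xa^2=a$. On one hand $ax^2a^2=(ax^2)a^2=xa^2=a$. On the other hand $ax^2a^2=ax(xa^2)=axa$. Hence $axa=a$; this identity was also noted in the proof of Theorem 2.1.

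Finally I compute directly:
\[
a(ca)^{\pi}=a-a(ca)^{\#}ca=a-\big(a(ca)^{\#}c\big)a=a-axa=0 .
\]
The only delicate step is justifying $a^{\tiny\textcircled{\#}}=(ca)^{\#}c$ under the present hypotheses, that is, making sure we are in the situation where the backward direction of Theorem 3.1 applies. That is guaranteed by the equivalence already proved in Theorem 3.1, so no further obstacle is expected.
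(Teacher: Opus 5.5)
Your proposal is correct and is essentially the paper's own argument: both take the identity $a^{\tiny\textcircled{\#}}=(ca)^{\#}c$ from Theorem 3.1 and conclude $a(ca)^{\pi}=a-a\big((ca)^{\#}c\big)a=a-aa^{\tiny\textcircled{\#}}a=0$. The differences are cosmetic. The paper passes from $a(ca)(ca)^{\#}$ to $a(ca)^{\#}ca$ by a detour through Cline's formula $(ca)^{\#}=c((ac)^D)^2a$, whereas you simply use that $(ca)^{\#}$ commutes with $ca$, and you also verify $aa^{\tiny\textcircled{\#}}a=a$ explicitly.
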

\begin{proof} In view of Theorem 3.1, $ac\in R^D$, and then $$\begin{array}{rll}
a(ca)^{\pi}&=&a-a(ca)(ca)^{\#}=a-a(ca)(c((ac)^D)^2a)\\
&=&a-(ac)(ac)^Da=a-a(c(ac)^D)a\\
&=&a-a(c((ac)^D)^2a)ca=a-a((ca)^{\#}c)a\\
&=&a-aa^{\tiny\textcircled{\#}}a=0,
\end{array}$$ as required.\end{proof}

We next characterize the right hybrid core-$(b,c)$-inverse of an element $a$ using the EP property of $ab$. Surprisingly, as shown below, the left and right hybrid core-$(b,c)$-inverses are not always dual.

\begin{thm} Let $a,b,c\in R$ with $r(b)=r(c)$. Then $a\in R_{rh(b,c)}^{\tiny\textcircled{\#}}$ if and only if\end{thm}
\begin{enumerate}
\item [(1)] $ab\in R$ is EP;
\item [(2)] $r(ab)=r(b), (ab)^{\pi}b=0$ and $(ba)^{\pi}a=0$.
\end{enumerate}
\begin{proof} $\Longrightarrow $ By hypothesis, $a^{\tiny\textcircled{\#}}=a_{rh}^{(b,c)}$. In light of ~\cite[Lemma 2.2]{Z2},
$a_{rh}^{(b,c)}ab=b$, and then $r(ab)=r(b)=r(c)$. By virtue of ~\cite[Theorem 2.8]{Z}, $ab\in R^{\#}$ and $a_{rh}^{(b,c)}=b(ab)^{\#}$.
Then $a^{\tiny\textcircled{\#}}=b(ab)^{\#}$. This implies that $ab(ab)^{\#}=aa^{\tiny\textcircled{\#}}$. Thus, $[ab(ab)^{\#}]^*=ab(ab)^{\#}$.
By virtue of ~\cite[Lemma 2.1]{XS1}, $ab\in R$ is EP.

In view of Theorem 2.4, $aR=bR$, and so $a=by$ and $b=az$ for some $y,z\in R$.
Then we verify that
$$\begin{array}{rll}
(ba)^{\pi}a&=&(ba)^{\pi}by=(ba)^{\pi}(a_{rh}^{(b,c)}ab)y=(ba)^{\pi}(b(ab)^{\#})aby\\
&=&(ba)^{\pi}(ba)^d(ba)b(ab)^{\#}aby=0,\\
(ab)^{\pi}b&=&(ab)^{\pi}az=(ab)^{\pi}(aa^{\tiny\textcircled{\#}}a)z=(ab)^{\pi}a(b(ab)^{\#})az\\
&=&\big((ab)^{\pi}ab\big)(ab)^{\#}az=0,
\end{array}$$ as desired.

$\Longleftarrow $ Since $ab\in R$ is EP, it follows by~\cite[Lemma 2.1]{XS1} that $ab\in R^{\#}$ and $\big(ab(ab)^{\#}\big)^*=ab(ab)^{\#}$.
By hypothesis, $r(ab)=r(b)=r(c)$. In view of~\cite[Theorem 2.8]{Z}, $a\in R_{rh}^{(b,c)}$ and $a_{rh}^{(b,c)}=b(ab)^{\#}$.
Since $(ab)^{\pi}b=0$, we deduce that $$a(b(ab)^{\#})^2=ab(ab)^{\#}((ab)(ab)^{\#}b)(ab)^{\#}=((ab)(ab)^{\#}b)(ab)^{\#}=b(ab)^{\#}.$$ As $(ba)^{\pi}a=0$, we verify that $$b(ab)^{\#}a^2=ba((ba)^d)^2ba^2=ba(ba)^da=(1-(ba)^{\pi})a=a.$$ Therefore $a^{\tiny\textcircled{\#}}=b(ab)^{\#}=a_{rh}^{(b,c)}$, as required.\end{proof}

\begin{cor} Let $a,b\in R$. Then the following are equivalent:\end{cor}
\begin{enumerate}
\item [(1)] $a\in R_{(b,b)}^{\tiny\textcircled{\#}}$.
\item [(2)] $a\in R_{lh(b,b)}^{\tiny\textcircled{\#}}$.
\item [(3)] $a\in R_{rh(b,b)}^{\tiny\textcircled{\#}}$.
\item [(4)] $ab\in R$ is EP, $\ell(ba)=\ell(b), (ab)^{\pi}b=(ba)^{\pi}a=0$.
\item [(5)] $ab\in R$ is EP, $r(ab)=r(b), (ba)^{\pi}a=0$ and $(ab)^{\pi}b=0$.
\end{enumerate}
\begin{proof} This is obvious by choosing $b=c$ in Theorem 2.8, Theorem 3.1 and Theorem 3.3.\end{proof}

We subsequently characterize EP elements in a ring via the left (right) hybrid core-$(b,c)$-invertibility.

\begin{lem} Let $a\in R$ and $n\in {\Bbb N}$. Then the following are equivalent:\end{lem}
\begin{enumerate}
\item [(1)] $a\in R$ is EP.
\item [(2)] $a\in R$ has core-$(a^*,a^*)$ inverse.
\item [(3)] $a\in R$ has left (right) hybrid core-$(a^*,a^*)$ inverse.
\item [(4)] $a\in R$ has left (right) hybrid core-$\big((a^n)^*,a^n\big)$ inverse.
\end{enumerate}
\begin{proof} $(1)\Leftrightarrow (2)$ In view of ~\cite{DM}, $a^{\dag}$ is exact the $(a^*,a^*)$-inverse of $a$.
This equivalence is proved by ~\cite[Lemma 3.5]{XS1}.

$(2)\Rightarrow (3)$ This is proved by Corollary 3.4.

$(3)\Rightarrow (1)$ Case 1. $a\in R$ has left hybrid core-$\big(a^*,a^*\big)$-inverse. According to Theorem 2.1, $a\in R^{\tiny\textcircled{\#}}$ and $\ell(a)=\ell(a^*)$. Since $a=aa^{\tiny\textcircled{\#}}a$, we see that $1-a^{\tiny\textcircled{\#}}a\in \ell(a^*)$. Hence,
$a^*=a^{\tiny\textcircled{\#}}aa^*=a(a^{\tiny\textcircled{\#}})^2aa^*\in aR$. This implies that $a^*R\subseteq aR$.
Therefore $a\in R$ is EP by ~\cite[Theorem 3.9]{XS1}.

Case 2. $a\in R$ has right hybrid core-$\big(a^*,a^*\big)$-inverse. In view of Theorem 2.1, $a\in R^{\tiny\textcircled{\#}}$ and $aR=a^*R$. By virtue of ~\cite[Corollary 3.10]{XS1}, $a\in R$ is EP, as required.

$(1)\Rightarrow (4)$ Since $a\in R$ is EP, we see that $a^{\tiny\textcircled{\#}}=a^{\#}$.
If $ra=0$, then $r(a^n)^*=r(a^naa^{\#})^*=raa^{\#}(a^n)^*=0$; hence, $\ell(a)\subseteq \ell(a^n)^*$.
If $r(a^n)^*=0$, then $r(a^n)(a^{\#})^*=0$, and so $raa^{\tiny\textcircled{\#}}=0$. Thus $ra=()a=0.$
This implies that $\ell(a^n)^*\subseteq \ell(a)$. Hence, $\ell(a)=\ell(a^n)^*$.
 Obviously, $a^*=(a^2a^{\tiny\textcircled{\#}})^*=a^*(aa^{\tiny\textcircled{\#}})=a^*(a^{\tiny\textcircled{\#}})^na^n$.
 On the other hand, $a^n=a^n(aa^{\#})=a^n(aa^{\#})^*=a^n(a^{\#})^*a^*$.
 We infer that $Ra^*=Ra^n$. Therefore $a^{\tiny\textcircled{\#}}=a^{\big((a^n)^*,a^n\big)}$ by Theorem 2.1.

$(4)\Rightarrow (1)$ Case 1. $a\in R$ has left hybrid core-$\big((a^n)^*,a^n\big)$ inverse. According to Theorem 2.1, $a\in R^{\tiny\textcircled{\#}}$ and $Ra^*=Ra^n$. Hence, $Ra^*\subseteq Ra$. Therefore $a\in R$ is EP by~\cite[Corollary 3.10]{XS1}.

Case 2. $a\in R$ has left hybrid core-$\big((a^n)^*,a^n\big)$ inverse. According to Theorem 2..1, $a\in R^{\tiny\textcircled{\#}}$ and $aR=(a^n)^*R\subseteq a^*R$. In light of ~\cite[Corollary 3.10]{XS1}, $a\in R$ is EP, as desired.
\end{proof}

\begin{thm} Let $a,b,c\in R$. Then the following are equivalent:\end{thm}
\begin{enumerate}
\item [(1)] $a\in R$ is EP.
\item [(2)] $a\in R$ has core-$\big((a^D)^*,(a^D)^*\big)$ inverse.
\item [(3)] $a\in R$ has left (right) hybrid core-$\big((a^D)^*,(a^D)^*\big)$ inverse.
\item [(4)] $a\in R$ has left (right) hybrid core-$\big(a^D,(a^D)^*\big)$ inverse.
\item [(5)] $a\in R$ has left (right) hybrid core-$\big((a^D)^*,a^D\big)$ inverse.
\end{enumerate}
\begin{proof} $(1)\Leftrightarrow (2)$ Let $n=ind(a)$. Then $a^{n+1}a^D=a^n$ and $a(a^D)^2=a^D$. We directly verify that
$a^{\tiny\textcircled{\#}}=a^{(a^*,a^*)}$ if and only if $a^{\tiny\textcircled{\#}}=a^{\big((a^D)^*,(a^D)^*\big)}$.
This equivalence is proved by Lemma 3.5.

$(2)\Rightarrow (3)$ This is obvious by Corollary 2.6.

$(3)\Rightarrow (1)$ By hypothesis, we have $a^{\tiny\textcircled{\#}}=a_{lh}^{(a^D,(a^D)^*)}.$
By virtue of Theorem 2.1, $\ell(a)=\ell(a^D)^*$. Since $(1-a^Da)^*(a^D)^*=0$, we deduce that
$(1-a^Da)^*a=0$, and so $a=a^*(a^D)^*a$. This implies that $aR\subseteq a^*R$.
Therefore $a\in R$ is EP by ~\cite[Corollary 3.10]{XS1}.

$(1)\Leftrightarrow (4)\Leftrightarrow (5)$ These can be proved in a similar way.\end{proof}

\section{hybrid core-EP-$(b,c)$-inverses}

The objective of this section is to establish the fundamental properties of the hybrid core-EP-$(b,c)$-inverses within a ring.

\begin{thm} Let $a,b,c\in R$. Then the following are equivalent:\end{thm}
\begin{enumerate}
\item [(1)] $a\in R_{lh(b,c)}^{\tiny\textcircled{D}}$.
\vspace{-.5mm}
\item [(2)] $a\in R^{\tiny\textcircled{D}}$ and $aa^{\tiny\textcircled{D}}a\in R_{lh(b,c)}^{\tiny\textcircled{\#}}.$
\end{enumerate}
In this case, $a^{\tiny\textcircled{D}}=(aa^{\tiny\textcircled{D}}a)_{lh}^{(b,c)}.$
\begin{proof} $(1)\Rightarrow (2)$ By hypothesis, there exist $x,y\in R$ such that $$a=x+y, x^*y=yx=0, x\in
R_{lh(b,c)}^{\tiny\textcircled{\#}}, y\in R^{nil}.$$ Then $x\in R^{\tiny\textcircled{\#}}$.
By virtue of~\cite[Theorem 2.1]{CM5}, $a\in R^{\tiny\textcircled{D}}$ and $$
a^{\tiny\textcircled{D}}=x^{\tiny\textcircled{\#}}=x_{lh}^{(b,c)}=(aa^{\tiny\textcircled{D}}a)_{lh}^{(b,c)}.$$

$(2)\Rightarrow (1)$ By hypothesis, $a\in R^{\tiny\textcircled{D}}$ and $aa^{\tiny\textcircled{D}}a\in R_{lh(b,c)}^{\tiny\textcircled{\#}}.$ One easily verifies that $$\big((aa^{\tiny\textcircled{D}}a)_{lh}^{(b,c)}\big)^{\tiny\textcircled{\#}}=a^{\tiny\textcircled{D}}.$$
In view of ~\cite[Theorem 2.1]{CM5}, there exist $x,y\in R$ such that $$a=x+y, x^*y=yx=0, x\in
R^{\tiny\textcircled{\#}}, y\in R^{nil}.$$ Moreover, we have $$
x^{\tiny\textcircled{\#}}=a^{\tiny\textcircled{D}}=(aa^{\tiny\textcircled{D}}a)_{lh}^{(b,c)}=x_{lh}^{(b,c)}.
$$ Therefore $x\in R_{lh(b,c)}^{\tiny\textcircled{\#}}$, as desired.\end{proof}

\begin{cor} Let $a,b,c\in R$. Then the following are equivalent:\end{cor}
\begin{enumerate}
\item [(1)] $a\in R_{lh(b,c)}^{\tiny\textcircled{D}}$.
\vspace{-.5mm}
\item [(2)] There exist $x\in R$ and $n\in {\Bbb N}$ such that $$x=ax^2, (ax)^*=ax, axa\in R_{lh(b,c)}^{\tiny\textcircled{\#}}, a^n=xa^{n+1}$$
\vspace{-.5mm}
\item [(3)] There exist $x\in R$ and $n\in {\Bbb N}$ such that $$x=ax^2, (ax)^*=ax, \ell(ax)=\ell(b), Rx=Rc,
a^n=xa^{n+1}.$$
\end{enumerate}
In this case, $x=a_{lh(b,c)}^{\tiny\textcircled{D}}$.
\begin{proof} $(1)\Rightarrow (2)$ Set $x=a^{\tiny\textcircled{D}}$. Then $x=ax^2, (ax)^*=ax~\mbox{and}~a^n=xa^{n+1}$ for some $n\in {\Bbb N}$. According to Theorem 4.1, $axa\in R_{lh(b,c)}^{\tiny\textcircled{\#}}$, as required.

$(2)\Rightarrow (3)$ By hypothesis, we have
$$x=ax^2, (ax)^*=ax, axa\in R_{lh(b,c)}^{\tiny\textcircled{\#}}, a^n=xa^{n+1}$$ for some $n\in {\Bbb N}$.
Then $\ell(axa)=\ell(b)$ and $R(axa)^*=Rc$. Obviously, $x=xax$, and then $\ell(axa)=\ell(ax)$.
We easily check that $(axa)^*=a^*(ax)^*=a^*ax\in Rx$. On the other hand,
$x=xax=x(ax)^*=x(axax)^*=xx^*(axa)^*\in R(axa)^*$.
Therefore $R(axa)^*=Rx$, as required.

$(3)\Rightarrow (1)$ By hypothesis, we have $a^{\tiny\textcircled{D}}=x$. Hence $x=xax$ by ~\cite[Theorem 2.10]{GC}.
Since $\ell(ax)=\ell(b)$, we have $\ell(axa)=\ell(ax)=\ell(b)$.
On the other hand, $R(axa)^*=Ra^*ax\subseteq Rx$. As $x=xax=x(axax)^*=xx^*(axa)^*$, we have
$Rx\subseteq R(axa)^*$. Therefore $R(axa)^*=Rx=Rc$. Clearly, $(axa)^{\tiny\textcircled{D}}=x$.
Therefore $axa\in R_{lh(b,c)}^{\tiny\textcircled{\#}}$ by Theorem 2.1. According to Theorem 4.1, $a\in R_{lh(b,c)}^{\tiny\textcircled{D}}$.\end{proof}

We next investigate polar-like properties for the generalized left hybrid core-$(b,c)$-invertibility. We are ready to prove:

\begin{thm} Let $a,b,c\in R$. Then $a\in R_{lh(b,c)}^{\tiny\textcircled{D}}$ if and only if\end{thm}
\begin{enumerate}
\item [(1)] $a\in R^D$;
\item [(2)] there exists a projection $p\in R$ such that $$a+p\in R^{-1}, 1-p\in R_{lh(b,c)}^{\tiny\textcircled{\#}}, pa=pap\in R^{nil}.$$
\end{enumerate}
\begin{proof} $(1)\Rightarrow (2)$ In view of ~\cite[Theorem 2.3]{GC}, $a\in R^D$. Since $a\in R_{lh(b,c)}^{\tiny\textcircled{D}}$, by using Theorem 4.1, there exist $x,y\in R$ such that $$a=x+y, x^*y=yx=0, x\in
R_{lh(b,c)}^{\tiny\textcircled{\#}}, y\in R^{nil}.$$
By virtue of Theorem 2.1, we have
$$\begin{array}{c}
x=xx^{\tiny\textcircled{\#}}x=xx_{lh(b,c)}^{\tiny\textcircled{\#}}x,\\
\ell(x)=\ell(b),\\
Rx^*=Rc.
\end{array}$$ Let $p=1-xx_{lh(b,c)}^{\tiny\textcircled{\#}}$.
Then $p^2=p=p^*$ and $px=0$. We directly check that
$$\begin{array}{c}
(x+1-xx_{lh(b,c)}^{\tiny\textcircled{\#}})(x_{lh(b,c)}^{\tiny\textcircled{\#}}+1-x_{lh(b,c)}^{\tiny\textcircled{\#}}x)=1\\
=(x_{lh(b,c)}^{\tiny\textcircled{\#}}+1-x_{lh(b,c)}^{\tiny\textcircled{\#}}x)(x+1-xx_{lh(b,c)}^{\tiny\textcircled{\#}}).
\end{array}$$
Hence, $$(x+p)^{-1}=x_{lh(b,c)}^{\tiny\textcircled{\#}}+1-x_{lh(b,c)}^{\tiny\textcircled{\#}}x.$$ Since $y(x+p)=y(x+1-xx_{lh(b,c)}^{\tiny\textcircled{\#}})=y$, we see that $y(x+p)^{-1}=y\in R^{nil}$.
Then $(x+p)^{-1}y\in R^{nil}$. Hence, $1+(x+p)^{-1}y\in R^{-1}$.
Therefore, we check that
$$\begin{array}{rll}
pa&=&p(x+y)=py=(1-xx_{lh(b,c)}^{\tiny\textcircled{\#}})y\\
&=&(1-xx_{lh(b,c)}^{\tiny\textcircled{\#}})^*y\\
&=&(1-(x_{lh(b,c)}^{\tiny\textcircled{\#}})^*x^*)y\\
&=&y\in R^{nil},\\
pa(1-p)&=&yxx_{lh(b,c)}^{\tiny\textcircled{\#}}=0, pa=pap,\\
a+p&=&x+y+p=(x+p)[1+(x+p)^{-1}y]\in R^{-1}.\end{array}$$

Obviously, $(1-p)^*=(1-p)^2=1-p\in R^{\tiny\textcircled{\#}}.$ Moreover, we check that

Claim 1. $\ell(1-p)=\ell(b)$.

$\ell(1-p)=\ell(xx_{lh}^{\tiny\textcircled{\#}})=\ell(x)=\ell(b)$.

Claim 2. $r(1-p)^*=r(c)$.

$r(1-p)=r(xx_{lh}^{\tiny\textcircled{\#}})=r(x^*)=r(c)$.

Therefore $1-p\in R_{lh(b,c)}^{\tiny\textcircled{\#}}$, as required.

$(2)\Rightarrow (1)$ By hypothesis, there exists a projection $p\in R$ such that $$u:=a+p\in R^{-1}, 1-p\in R_{lh(b,c)}^{\tiny\textcircled{\#}}, pa=pap\in R^{nil}.$$
Set $x=(1-p)a$ and $y=pa$. Then $a=x+y$ and $$\begin{array}{rll}
x^*y&=&[(1-p)a]^*(pa)=[a^*(1-p)^*]p^*a=a^*(1-p)^*p^*a=0,\\
yx&=&pa(1-p)a=(pap)(1-p)a=0,\\
y&=&pa\in R^{nil}.
\end{array}$$
Step 1. $x\in R^D$.

Since $pa(1-p)=0$, we have $a=pap+(1-p)ap+(1-p)a(1-p)$, i.e.,
$a=\left(
\begin{array}{cc}
pap&0\\
(1-p)ap&(1-p)a(1-p)
\end{array}
\right)_p$. By hypothesis, $pa\in R^{nil}$, then $pap\in R^{nil}$.
Hence, $pap\in \big(pRp\big)^{nil}\subseteq \big(pRp\big)^D.$ In view of ~\cite[Theorem 2.3]{CK},
$(1-p)a(1-p)\in \big((1-p)R(1-p)\big)^D\subseteq R^D.$ By using Cline's formula again,
$x=(1-p)a\in R^D,$ as desired.

Step 2. $x\in R^{\#}$. Observing that $$\begin{array}{rll}
u^{-1}x^2&=&u^{-1}(1-p)a(1-p)a=u^{-1}a(1-p)a-u^{-1}[pa(1-p)]a\\
&=&u^{-1}a(1-p)a=(a+p)^{-1}(a+p)(1-p)a=(1-p)a=x,
\end{array}$$ Set $k=ind(x)$. Then
$xx^Dx=(u^{-k}x^{k+1})x^Dx=u^{-k}(x^{k+1}x^D)x=u^{-k}x^{k+1}=x.$ This implies that
$x\in R^{\#}$ and $x^{\#}=x^D$.

Step 3. $x\in R_{lh(b,c)}^{\tiny\textcircled{\#}}$.

Clearly, $xu^{-1}=(1-p)a(a+p)^{-1}=(1-p)(a+p)(a+p)^{-1}=1-p$. Then $xu^{-1}x=(1-p)x=x$ and $(xu^{-1})^*=(1-p)^*=1-p^*=1-p=xu^{-1}$. Hence, $x\in R^{(1,3)}$. According to ~\cite[Theorem 2.6]{X1}, $x\in R^{\tiny\textcircled{\#}}$.

Claim 1. $\ell(x)=\ell(b)$.

$\ell(x)=\ell[(1-p)a]=\ell[(1-p)u]=\ell(1-p)=\ell(b).$

Claim 2. $r(x^*)=r(c)$.

$r(x^*)=r[a^*(1-p)]=r[u^*(1-p)]=r(1-p)=r(c)$.

In light of Corollary 2.2, $x\in R_{lh(b,c)}^{\tiny\textcircled{\#}}.$

Therefore $a\in R_{(b,c)}^{\tiny\textcircled{D}}$.\end{proof}

\begin{cor} Let $a,b,c\in R$. Then $a\in R_{lh(b,c)}^{\tiny\textcircled{D}}$ if and only if the following two conditions hold:\end{cor}
\begin{enumerate}
\item [(1)] $a\in R^D$;
\item [(2)] there exists a projection $p\in R$ such that $$a+p\in R^{-1}, \ell(1-p)=\ell(b), r(1-p)=r(c), pa=pap\in R^{nil}.$$
\end{enumerate}
\begin{proof} By using Theorem 4.3, we obtain the result, as for a projection $p$, $1-p\in R_{lh(b,c)}^{\tiny\textcircled{\#}}$ if and only if
$\ell(1-p)=\ell(b), r(1-p)=r(c)$.\end{proof}

\begin{exam}\end{exam}Let $A=\left(
  \begin{array}{cccc}
    0&1&-1&0\\
    0&-1&0&0\\
    0&0&0&0\\
    -1&-1&0&0\\
  \end{array}
\right)\in {\Bbb C}^{4\times 4}$. We take the involution on ${\Bbb C}^{4\times 4}$ as the conjugate transpose. Then $$A^D=\left(
  \begin{array}{cccc}
    0&1&0&0\\
    0&-1&0&0\\
    0&0&0&0\\
    0&0&0&0\\
  \end{array}
\right), (A^D)^{\tiny\textcircled{\#}}=\left(
  \begin{array}{cccc}
    -\frac{1}{2}&\frac{1}{2}&0&0\\
   \frac{1}{2}& -\frac{1}{2}&0&0\\
    0&0&0&0\\
    0&0&0&0\\
  \end{array}
\right).$$ In view of ~\cite[Corollary 4.2]{CM5}, we compute that $$A^{\tiny\textcircled{D}}=(A^D)^2(A^D)^{\tiny\textcircled{\#}}=\left(
  \begin{array}{cccc}
    -\frac{1}{2}&\frac{1}{2}&0&0\\
   \frac{1}{2}& -\frac{1}{2}&0&0\\
    0&0&0&0\\
    0&0&0&0\\
  \end{array}
\right).$$ Choose $$B=AA^{\tiny\textcircled{D}}A=\left(
  \begin{array}{cccc}
    0&1&-\frac{1}{2}&0\\
   0&-1&\frac{1}{2}&0\\
    0&0&0&0\\
    0&0&0&0\\
  \end{array}
\right), C=B^*=\left(
  \begin{array}{cccc}
    0&0&0&0\\
   1&-1&0&0\\
   -\frac{1}{2}&\frac{1}{2}&0&0\\
    0&0&0&0\\
  \end{array}
\right).$$ Then $A$ has left hybrid core-EP-$(B,C)$-inverse and
$$A_{lh(B,C)}^{\tiny\textcircled{D}}=\left(
  \begin{array}{cccc}
    -\frac{1}{2}&\frac{1}{2}&0&0\\
   \frac{1}{2}& -\frac{1}{2}&0&0\\
    0&0&0&0\\
    0&0&0&0\\
  \end{array}
\right).$$
Choose $P=I_4-BB^{\tiny\textcircled{D}}=\left(
  \begin{array}{cccc}
    \frac{1}{2}&\frac{1}{2}&0&0\\
   \frac{1}{2}& \frac{1}{2}&0&0\\
    0&0&1&0\\
    0&0&0&1\\
  \end{array}
\right).$ We directly verify that
$$\begin{array}{rll}
A+P&=&\left(
  \begin{array}{cccc}
    \frac{1}{2}&\frac{3}{2}&-1&0\\
   \frac{1}{2}& -\frac{1}{2}&0&0\\
    0&0&1&0\\
    -1&-1&0&1\\
  \end{array}
\right)~\mbox{is invertible},\\
\ell(I_4-P)&=&\ell(B), r(I_4-P)=r(C),\\
PA&=&\left(
  \begin{array}{cccc}
    0&0&-\frac{1}{2}&0\\
   0&0&-\frac{1}{2}&0\\
    0&0&0&0\\
    -1&-1&0&0\\
  \end{array}
\right)=PAP~\mbox{is nil}.
\end{array}$$

\section{representations of hybrid core-EP-$(b,c)$-inverses}

The aim of this section is to establish representations of the hybrid core-EP-$(b,c)$-inverse by using the hybrid core-$(b,c)$-inverse.
We come now to the demonstration for which this section has been developed.

\begin{thm} Let $a,b,c\in R$. Then the following are equivalent:\end{thm}
\begin{enumerate}
\item [(1)] $a\in R_{lh(b,c)}^{\tiny\textcircled{D}}.$
\vspace{-.5mm}
\item [(2)] There exists $m\in {\Bbb N}$ such that $a^k\in R_{lh(b,c)}^{\tiny\textcircled{\#}}$ for any $k\geq m$.
\vspace{-.5mm}
\item [(3)] $a^m\in R_{lh(b,c)}^{\tiny\textcircled{\#}}$ for some $m\in {\Bbb N}$.
\end{enumerate}
In this case, $$a_{lh(b,c)}^{\tiny\textcircled{D}}=a^{m-1}(a^m)_{lh(b,c)}^{\tiny\textcircled{\#}}.$$
\begin{proof} $(1)\Rightarrow (2)$ In view of Theorem 4.1, $a\in R^{\tiny\textcircled{D}}$ and $a^{\tiny\textcircled{D}}=(aa^{\tiny\textcircled{D}}a)_{lh(b,c)}^{\tiny\textcircled{\#}}$.
Set $m=ind(a)$ and $k\geq m$. Then $a^k\in R^{\tiny\textcircled{\#}}$ and $a^{\tiny\textcircled{D}}=a^{k-1}(a^k)^{\tiny\textcircled{\#}}$.

$$\begin{array}{rll}
\ell(aa^{\tiny\textcircled{D}}a)&=&\ell(b),\\
R(aa^{\tiny\textcircled{D}}a)^*&=&Rc.
\end{array}$$

If $xa^k=0$, then $xaa^{\tiny\textcircled{D}}a=0$, and so $xb=0$. If $xb=0$, then $xaa^{\tiny\textcircled{D}}a=0$.
This implies that $xaa^D(a^k)(a^k)^{(1,3)}a=0$, and so $xaa^Da^k=0$. This implies that $xa^k=0$.
Thus $\ell(a^k)=\ell(b).$

We easily verify that
$$a^k=aa^Da^k=a(a^Da^k(a^k)^{(1,3)})a^k=(aa^{\tiny\textcircled{D}}a)a^{k-1}.$$ Hence,
$(a^k)^*=(a^{k-1})^*(aa^{\tiny\textcircled{D}}a)^*\in Rc.$

We have $aa^{\tiny\textcircled{D}}a=aa^Da^k(a^k)^{(1,3)}a$, and then
$(aa^{\tiny\textcircled{D}}a)^*=(aa^D(a^k)^{(1,3)}a)^*(a^k)^*$. This implies that
$c\in R(a^k)^*$; hence, $R(a^k)^*=Rc$. Thus $(a^k)^{\tiny\textcircled{\#}}=(a^k)_{lh}^{(b,c)}$.
Therefore $a^k\in R_{lh(b,c)}^{\tiny\textcircled{\#}}$, as desired.

$(2)\Rightarrow (3)$ This is obvious.

$(3)\Rightarrow (1)$ By hypothesis, $a^m\in R_{b,c}^{\tiny\textcircled{\#}}$ for some $m\in {\Bbb N}$.
Then $a^m\in R^{\tiny\textcircled{\#}}$ and $(a^m)^{\tiny\textcircled{\#}}=(a^m)_{lh}^{(b,c)}$.
Thus we have $\ell(a^m)=\ell(b)$ and $R(a^m)^*=Rc$. In light of~\cite[Theorem 2.5]{GC}, $a\in R^{\tiny\textcircled{D}}$ and $a^{\tiny\textcircled{D}}=a^{m-1}(a^m)^{\tiny\textcircled{\#}}.$ Hence $aa^{\tiny\textcircled{D}}a=a^m(a^m)^{\tiny\textcircled{\#}}a$.

Claim 1. $\ell[aa^{\tiny\textcircled{D}}a]=\ell(b)$.

If $xa^m(a^m)^{\tiny\textcircled{\#}}a=0$, then $xa^m=0$, hence, $xb=0$. If $xb=0$, then
$xa^m=0$, and so $xa^m(a^m)^{\tiny\textcircled{\#}}a=0$. This implies that $\ell[aa^{\tiny\textcircled{D}}a]=\ell(b)$.

Claim 2. $R[aa^{\tiny\textcircled{D}}a]^*=Rc$.

Clearly, $a^m=[a^m(a^m)^{\tiny\textcircled{\#}}a]a^{m-1}$; hence, $$c\in R(a^m)^*\subseteq R(a^{m-1})^*[a^m(a^m)^{\tiny\textcircled{\#}}a]^*\subseteq
R[a^m(a^m)^{\tiny\textcircled{\#}}a]^*.$$ On the other hand, $[a^m(a^m)^{\tiny\textcircled{\#}}a]^*=[(a^m)^{\tiny\textcircled{\#}}a]^*(a^m)^*\in R(a^m)^*\subseteq Rc$. Thus, $R[aa^{\tiny\textcircled{D}}a]^*=Rc$.

Therefore we have $aa^{\tiny\textcircled{D}}a\in \mathcal{A}_{(b,c)}^{\tiny\textcircled{\#}}$. According to Theorem 4.1, we obtain the result.\end{proof}

\begin{cor} Let $a,b,c\in R$. Then the following are equivalent:\end{cor}
\begin{enumerate}
\item [(1)] $a\in R_{lh(b,c)}^{\tiny\textcircled{D}}$.
\vspace{-.5mm}
\item [(2)] $a\in R^D$ and $a^D\in R_{lh(b,c)}^{\tiny\textcircled{\#}}$.
\end{enumerate}
In this case, $$a_{lh(b,c)}^{\tiny\textcircled{D}}=(a^D)^2(a^D)_{lh(b,c)}^{\tiny\textcircled{\#}}.$$
\begin{proof} $(1)\Rightarrow (2)$ Set $m=ind(a)$. In view of Theorem 5.1, $a^m\in R^{\#}$, and then $a\in R^D$.
By using Theorem 5.1 again, $a^m\in R_{lh(b,c)}^{\tiny\textcircled{\#}}$ and
$a^{\tiny\textcircled{D}}=a^{m-1}(a^m)^{\tiny\textcircled{\#}}$. In view of~\cite[Theorem 2.7]{GC}, $a^D\in \mathcal{A}^{\tiny\textcircled{\#}}$.

By virtue of Theorem 2.1, we have $\ell(a^m)=\ell(b)$ and $R(a^m)^*=Rc$.

Claim 1. $\ell(a^D)=\ell(b)$.

If $xa^D=0$, then $xa^m=(xa^D)a^{m+1}=0$, and so $x\in \ell(b)$. If $xb=0$, then $xa^m=0$, hence, $xa^D=(xa^m)(a^D)^{m+1}=0$.
Thus, $\ell(a^D)=\ell(b)$.

Claim 2. $R(a^D)^*=Rc$.

Clearly, $a^D=a^m(aa^D)$, hence, $(a^D)^*=(aa^D)^*(a^m)^*\in R(a^m)^*\subseteq Rc$.
On the other hand, $a^m=a^Da^{m+1}$; hence, $(a^m)^*\in R(a^D)^*$. This implies that $c\in R(a^D)^*$.
Hence, $R(a^D)^*=Rc$.

Therefore $a^D\in R_{lh(b,c)}^{\tiny\textcircled{\#}}$, as required.

$(2)\Rightarrow (1)$ By hypothesis, we have $a^D\in R_{lh}^{\tiny\textcircled{\#}}$. In view of ~\cite[Theorem 4.1]{CM5},
$a\in \mathcal{A}^{\tiny\textcircled{D}}$. Moreover, we have $\ell(a^D)=\ell(b)$ and $R(a^D)^*=Rc$.

Claim 1. $\ell(aa^{\tiny\textcircled{D}}a)=\ell(b)$.

If $xaa^{\tiny\textcircled{D}}a=0$, we have $xa^D=0$, and then $xb=0$.
If $xb=0$, then $xa^D=0$, and so $x[aa^{\tiny\textcircled{D}}a]=(xa^D)a^2a^{\tiny\textcircled{D}}a=0$.
Thus $\ell(aa^{\tiny\textcircled{D}}a)=\ell(b)$.

Claim 2. $R(aa^{\tiny\textcircled{D}}a)^*=Rc$.

Obviously, $(aa^{\tiny\textcircled{D}}a)^*=a^*(a^Da^2a^{\tiny\textcircled{D}})^*\in R(a^D)^*\in Rc$.
On the other hand, $a^D=[aa^{\tiny\textcircled{D}}a]a^{m-1}(a^D)^{m}$; whence, $$(a^D)^*=[a^{m-1}(a^D)^{m}]^*[aa^{\tiny\textcircled{D}}a]^*\in R[aa^{\tiny\textcircled{D}}a]^*.$$ Therefore $R(aa^{\tiny\textcircled{D}}a)^*=Rc$.

Accordingly, $aa^{\tiny\textcircled{D}}a\in R_{lh(b,c)}^{\tiny\textcircled{\#}}$.
By virtue of Theorem 4.1, $a\in R_{lh(b,c)}^{\tiny\textcircled{D}}$, as asserted.\end{proof}

\begin{cor} Let $a,b,c\in R$. Then the following hold:\end{cor}
\begin{enumerate}
\item [(1)] $a\in R^{\tiny\textcircled{\#}}$ if and only if $a\in R_{lh(a, a^*)}^{\tiny\textcircled{\#}}$.
\vspace{-.5mm}
\item [(2)] $a\in R^{\tiny\textcircled{D}}$ if and only if $a\in R^{D}$ and $a\in R_{lh(a^D, (a^D)^*)}^{\tiny\textcircled{D}}$.
\end{enumerate}
\begin{proof} This is obvious by Corollary 5.2.\end{proof}

Let ${\Bbb C}^{n\times n}$ be the ring of all $n\times n$ complex matrices, with conjugate transpose as the involution. For a complex $A\in {\Bbb C}^{n\times n}$, it follows by Corollary 5.3 that the core-EP inverse and left hybrid core-EP-$(A^D,(A^D)^*)$-inverse coincide with each other and
$A^{\tiny\textcircled{D}}=A_{lh(A^D,(A^D)^*)}^{\tiny\textcircled{D}}$.

\begin{cor} Let $a,b\in R$. Then the following are equivalent:\end{cor}
\begin{enumerate}
\item [(1)] $a\in R_{lh(b,b)}^{\tiny\textcircled{D}}$.
\item [(2)] $a\in R_{h(b,b)}^{\tiny\textcircled{D}}$.
\item [(3)] $a\in R^D, a^Db\in R$ is EP, $r(a^Db)=r(b), (ba^D)^{\pi}a^D=0$ and $(a^Db)^{\pi}b=0$.
\end{enumerate}
\begin{proof} This is obvious by Corollary 3.4 and Corollary 5.2.\end{proof}

\begin{thm} Let $a,b,c\in R$. Then the following are equivalent:\end{thm}
\begin{enumerate}
\item [(1)] $a\in R$ is *-DMP.
\item [(2)] $a\in R_{lh((a^D)^*,a^D)}^{\tiny\textcircled{D}}$.
\item [(3)] $a\in R_{rh((a^D)^*,a^D)}^{\tiny\textcircled{D}}$.
\end{enumerate}
\begin{proof} $(1)\Rightarrow (2)$ In view of~\cite[Lemma 3.1]{GC1}, we have
$a\in R^{\tiny\textcircled{D}}$ and $a^{\tiny\textcircled{D}}=a^D$. We directly check that
$$\begin{array}{rll}
a^{\tiny\textcircled{D}}(aa^{\tiny\textcircled{D}}a)a^{\tiny\textcircled{D}}&=&a^{\tiny\textcircled{D}},\\
\ell(aa^{\tiny\textcircled{D}}a)&=&\ell(a^D)^*,\\
R(aa^{\tiny\textcircled{D}}a)^*&=&Ra^D.
\end{array}$$ Hence, $a^{\tiny\textcircled{D}}=(aa^{\tiny\textcircled{D}}a)_{lh}^{{\tiny\textcircled{\#}}_{((a^D)^*,a^D)}}.$
Therefore $a\in R_{lh((a^D)^*,a^D)}^{\tiny\textcircled{D}}$ by Theorem 4.1.

$(2)\Rightarrow (1)$ By hypothesis, we have $$R(aa^{\tiny\textcircled{D}}a)^*=Ra^D.$$
Since $(aa^{\tiny\textcircled{D}}a)^*[1-aa^{\tiny\textcircled{D}}]=a^*(aa^{\tiny\textcircled{D}})^*[1-aa^{\tiny\textcircled{D}}]
=a^*(aa^{\tiny\textcircled{D}})[1-aa^{\tiny\textcircled{D}}]=0$, we deduce that $a^D[1-aa^{\tiny\textcircled{D}}]=0$; hence,
$a^D=a^Daa^{\tiny\textcircled{D}}$. Thus, we have $$\begin{array}{rll}
a^{\tiny\textcircled{D}}-a^D&=&a^n(a^{\tiny\textcircled{D}})^{n+1}-a^Daa^{\tiny\textcircled{D}}\\
&=&[a^n-a^Da^{n+1}](a^{\tiny\textcircled{D}})^{n+1}.
\end{array}$$ This implies that $a^{\tiny\textcircled{D}}=a^D$. According to ~\cite[Lemma 3.1]{GC1},
$a\in R$ is *-DMP, as asserted.

$(1)\Leftrightarrow (3)$ This is proved by a similar way.\end{proof}

\begin{cor} Let $a\in R$. Then the following are equivalent:\end{cor}
\begin{enumerate}
\item [(1)] $a\in R$ is *-DMP.
\item [(2)] $a\in R_{((a^D)^*,a^D)}^{\tiny\textcircled{D}}$.
\end{enumerate}
\begin{proof} Since $a^D=a^Daa^D$, $a\in R_{((a^D)^*,a^D)}^{\tiny\textcircled{D}}$ if and only if $a\in R_{h((a^D)^*,a^D)}^{\tiny\textcircled{D}}$.
Therefore we complete the proof by Theorem 5.5.\end{proof}


\end{document}